\newtheorem{theorem}{Theorem}
\newtheorem{lemma}[theorem]{Lemma}
\newcommand\commentout[1]{}
\newcommand\Def[1]{\emph{#1}}
\def\th{^{\text{th}}}
\newcommand{\A}{{\bar{A}}}
\newcommand{\Z}{\mathbb{Z}}
\def\l{\lambda}
\def\0{\mathbf{0}}
\def\1{\mathbf{1}}
\begin{document}

\title[Variations on a Generating-Function Theme]{Variations on a Generating-Function Theme: Enumerating Compositions with Parts Avoiding an Arithmetic Sequence}  

\author{Matthias Beck}
\author{Neville Robbins}
\address{Department of Mathematics, San Francisco State University, San Francisco, CA 94132, USA}
\email{[mattbeck,nrobbins]@sfsu.edu}

\keywords{Composition, ordered partition, arithmetic sequence, generating function, recursion, Fibonacci sequence.}

\subjclass[2010]{Primary 11P83; Secondary 05A17.}
% 05A17 Partitions of integers
% 11P83 Partitions; congruences and congruential restrictions

\date{4 June 2014}

\thanks{We thank Steffen Eger and two anonymous referees for numerous helpful comments and suggestions. M.\ Beck's research was partially supported by the US National Science Foundation (DMS-1162638).}

\maketitle

\begin{abstract}
A \Def{composition} of a positive integer $n$ is a $k$-tuple $(\l_1, \l_2, \dots, \l_k) \in \Z_{ > 0 }^k$ such that $n = \l_1 + \l_2 + \dots + \l_k$.
Our goal is to enumerate those compositions whose parts $\l_1, \l_2, \dots, \l_k$ avoid a fixed arithmetic sequence. When this sequence is given by the even integers
(i.e., all parts of the compositions must be odd), it is well known that the number of compositions is given by the Fibonacci sequence. A much more recent theorem
says that when the parts are required to avoid all multiples of a given integer $k$, the resulting compositions are counted by a sequence given by a Fibonacci-type
recursion of depth $k$. We extend this result to arbitrary arithmetic sequences.
Our main tool is a lemma on generating functions which is no secret among experts but deserves to be more widely known.
\end{abstract}

\vspace{20pt}
\small
\begin{quote} 
{\it Life is the twofold internal movement of composition and decomposition at once general and continuous.} 
Henri de Blainville (1777--1850)
\end{quote} 
\normalsize

%--------------------------------------

\section{Introduction}

A \Def{composition} of a positive integer $n$ is a $k$-tuple $(\l_1, \l_2, \dots, \l_k) \in \Z_{ > 0 }^k$ such that
\[
  n = \l_1 + \l_2 + \dots + \l_k \, .
\] 
The integers $\l_1, \l_2, \dots, \l_k$ are the \Def{parts} and $k$ is the \Def{length} of the composition. Our goal is to enumerate all compositions of a given integer
for which the parts come from or avoid a fixed set, in our case formed by an arithmetic sequence. This goal is merely a variation on a theme that is well known to the
experts (see, e.g., \cite{gesselli,heubachmansourcompositionpartsset,hoggattlind,knopfmacherrobbins,milson,moserwhitney}), so that our paper has a definite expository flavor; we mainly wish to exhibit an approach to enumerating compositions that we find particularly elegant.

Enumeration results on integer compositions and \emph{partitions} (for which we do not distinguish $k$-tuples that are permutations of each other) form a classic body
of mathematics going back to at least Euler, including numerous applications throughout mathematics and some areas of physics. The books
\cite{andrewstheoryofpartitions,andrewseriksson,heubachmansour} serve as good introductions to this area of study.

Given a set $A \subseteq \Z_{ >0 }$, let $c_A(n)$ denote the number of compositions of $n$ (of any length) with parts in $A$.
Our point of departure is the case where $A$ consists of odd integers:

\begin{theorem}\label{thm:cayley}
If $A$ is the set of odd positive integers then $c_A(n)$ equals the $n\th$ Fibonacci number $f_n$, defined recursively (as usual) through
\begin{align*}
  f_1 &= f_2 := 1 \\
  f_j &:= f_{ j-1 } + f_{ j-2 } \ \text{ for } \ j \ge 3 \, .
\end{align*}
\end{theorem}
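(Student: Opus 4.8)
The plan is to compute the generating function $C_A(x) := \sum_{n \ge 0} c_A(n)\, x^n$ and recognize it as, up to its constant term, the Fibonacci generating function. First I would record the standard observation that a composition with parts in $A$ is nothing but a finite (possibly empty) word over the alphabet $A$ whose letters sum to $n$; grouping words by their length and summing the resulting geometric series gives
\[
  C_A(x) \ = \ \sum_{k \ge 0} \Bigl( \sum_{a \in A} x^a \Bigr)^{\! k} \ = \ \frac{1}{1 - \sum_{a \in A} x^a}
\]
as an identity of formal power series in $\Z[[x]]$ (the inner sum has zero constant term, so the geometric series converges). This is exactly the elementary principle the paper advertises as its main tool, specialized to the set-avoidance setting.

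Next I would specialize to $A = \{1, 3, 5, \dots\}$, for which $\sum_{a \in A} x^a = x + x^3 + x^5 + \cdots = \frac{x}{1-x^2}$. Substituting and clearing denominators,
\[
  C_A(x) \ = \ \frac{1}{1 - \frac{x}{1-x^2}} \ = \ \frac{1 - x^2}{1 - x - x^2} \, .
\]
Because $1 - x^2 = (1 - x - x^2) + x$, this rewrites as $C_A(x) = 1 + \frac{x}{1 - x - x^2}$, and $\frac{x}{1-x-x^2}$ is the familiar generating function $\sum_{n \ge 1} f_n\, x^n$ of the Fibonacci numbers. If one prefers not to quote that fact, one reads it off directly: writing $\frac{x}{1-x-x^2} = \sum_{n\ge1} a_n x^n$ and multiplying through by $1 - x - x^2$ forces $a_1 = a_2 = 1$ and $a_n = a_{n-1} + a_{n-2}$ for $n \ge 3$, which is precisely the recursion defining $f_n$. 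Comparing coefficients of $x^n$ for $n \ge 1$ then yields $c_A(n) = f_n$.

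The only point needing any care — and hence the "main obstacle," though it is a mild one — is the bookkeeping at small indices: the constant term of $C_A(x)$ equals $1$, corresponding to the empty composition of $0$, and this is exactly the "$1 +$" that I split off above, so it does not disturb the claim for $n \ge 1$. As an alternative to the generating-function route I would mention the direct bijective proof of the recursion $c_A(n) = c_A(n-1) + c_A(n-2)$: a composition of $n$ into odd parts either ends in the part $1$ (delete it to get a composition of $n-1$ into odd parts) or ends in a part $\ge 3$ (subtract $2$ from that last part to get a composition of $n-2$ into odd parts), and these two cases are in bijection with the two summands; checking $c_A(1) = c_A(2) = 1$ finishes the induction. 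I would present the generating-function version in the body of the paper, since it is the prototype for the arithmetic-sequence generalization that follows.
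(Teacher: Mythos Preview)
Your proof is correct and is precisely the paper's approach: the paper does not prove Theorem~\ref{thm:cayley} separately but obtains it as the $k=2$ case of Theorem~\ref{thm:robbins}, whose proof via Lemma~\ref{lem:moserwhitney} specializes exactly to your computation $C_A(x)=\frac{1-x^2}{1-x-x^2}=1+\frac{x}{1-x-x^2}$ followed by coefficient comparison. Your additional bijective argument is a nice bonus that the paper does not include.
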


At times it will be more natural to consider compositions of an integer whose parts avoid a given set $A$, and so we define $\A := \Z_{ >0 } \setminus A$. 
For example, Theorem \ref{thm:cayley} says that, when $A$ is the set of even positive integers then $c_\A(n)$ equals the $n\th$ Fibonacci number.

It is not clear who first proved Theorem \ref{thm:cayley}. The earliest reference we are aware of is \cite{hoggattlind} but we suspect that the theorem has been known earlier.
Cayley's collected works \cite[p.~16]{cayleycollectedpapers10} contains the result that $c_{ \{ j \in \Z : \, j \ge 2 \} } (n)$ equals the $n\th$ Fibonacci number, but
the fact that $c_{ \{ j \in \Z : \, j \ge 2 \} } (n) = c_{ \{ 2j+1 : \, j \ge 0 \} } (n)$ is
nontrivial (see, e.g., Sill's recent bijective proof \cite{sillscompositions}).

Theorem \ref{thm:cayley} virtually begs to be extended. Perhaps surprisingly, the most
natural generalization appeared only recently~\cite{robbinstribonacci,robbinstribonaccigeneralized}:

\begin{theorem}\label{thm:robbins}
Fix a positive integer $k$ and let $A$ be the set of all positive multiples of $k$.
Then $c_\A(n)$ is given by the sequence $(f_n)$ defined recursively through
\begin{align*}
  f_j &:= 2^{ j-1 } \ \text{ for } \ 1 \le j \le k-1 \\
  f_k &:= 2^{ k-1 } - 1 \\
  f_j &:= f_{ j-1 } + f_{ j-2 } + \dots + f_{ j-k } \ \text{ for } \ j > k \, .
\end{align*}
\end{theorem}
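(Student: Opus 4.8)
The plan is to compute the ordinary generating function $F(x) = \sum_{n \ge 0} c_\A(n)\, x^n$ in closed form and read the recursion off it directly. First I would invoke the generating-function lemma: a composition with parts in a set $S$ is just a finite sequence of elements of $S$, so $F(x) = \bigl(1 - \sum_{a \in \A} x^a\bigr)^{-1}$ (the constant term $c_\A(0) = 1$ accounting for the empty composition). Here $\A = \Z_{>0} \setminus \{k, 2k, 3k, \dots\}$, so the part-counting series is
\[
  \sum_{a \in \A} x^a \;=\; \frac{x}{1-x} - \frac{x^k}{1-x^k} \;=\; \frac{x - x^k}{(1-x)(1-x^k)} \, ,
\]
and a one-line simplification gives
\[
  F(x) \;=\; \frac{(1-x)(1-x^k)}{1 - 2x + x^{k+1}} \, , \qquad\text{hence}\qquad F(x) - 1 \;=\; \frac{x - x^k}{1 - 2x + x^{k+1}} \, .
\]

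The key algebraic observation is the factorization $1 - 2x + x^{k+1} = (1-x)(1 - x - x^2 - \dots - x^k)$, which one verifies by expanding (the interior terms telescope). Together with $\frac{x - x^k}{1-x} = x + x^2 + \dots + x^{k-1}$ this yields the clean identity
\[
  \bigl(1 - x - x^2 - \dots - x^k\bigr)\bigl(F(x) - 1\bigr) \;=\; x + x^2 + \dots + x^{k-1} \, .
\]
Comparing the coefficients of $x^n$ on both sides — writing $F(x) - 1 = \sum_{n \ge 1} c_\A(n)\, x^n$ and using the convention $c_\A(j) = 0$ for $j \le 0$ — gives, for every $n \ge 1$,
\[
  c_\A(n) \;=\; c_\A(n-1) + c_\A(n-2) + \dots + c_\A(n-k) + \varepsilon_n \, ,
\]
where $\varepsilon_n = 1$ for $1 \le n \le k-1$ and $\varepsilon_n = 0$ for $n \ge k$. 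For $n \ge k$ this is precisely the depth-$k$ recursion in the statement. For $1 \le n \le k-1$ it collapses to $c_\A(n) = 1 + \sum_{i=1}^{n-1} c_\A(i)$, whence a one-line induction gives $c_\A(n) = 2^{n-1}$; and for $n = k$ the inhomogeneous term is already gone while the term of index $n - k = 0$ contributes nothing, so $c_\A(k) = \sum_{i=1}^{k-1} 2^{i-1} = 2^{k-1} - 1$. These are exactly the initial conditions defining $(f_n)$, so $c_\A(n) = f_n$ for all $n$.

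There is no genuine obstacle here: once the generating-function lemma is in hand the argument is a short, essentially mechanical computation — which is, after all, the point of the paper. The only step that asks for a little care is the bookkeeping at the end, namely keeping track of exactly which $n$ receive the inhomogeneous contribution $\varepsilon_n = 1$; it is the fact that this contribution switches off starting at $n = k$ that produces the apparent anomaly $f_k = 2^{k-1} - 1$. As a sanity check, taking $k = 2$ recovers $f_1 = f_2 = 1$ together with the Fibonacci recursion, i.e., Theorem \ref{thm:cayley}.
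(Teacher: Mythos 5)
Your proof is correct and follows essentially the same route as the paper: apply the generating-function lemma, reduce $C_\A(x)$ to the rational function with denominator $1-x-x^2-\dots-x^k$, and read off the recursion and initial values by comparing coefficients. The only difference is cosmetic — you pass through the form $\frac{(1-x)(1-x^k)}{1-2x+x^{k+1}}$ and refactor, while the paper writes $\sum_{m\in\A}x^m=\frac{x+\dots+x^{k-1}}{1-x^k}$ directly — and your bookkeeping of the inhomogeneous term correctly explains the drop to $f_k=2^{k-1}-1$.
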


Naturally, Theorem \ref{thm:cayley} is the special case $k=2$ of Theorem~\ref{thm:robbins}. The instances $k=3$ and $k=4$ give rise to \emph{Tribonacci numbers}
\cite[Sequence A001590]{sloaneonlineseq} and \emph{Tetranacci numbers} \cite[Sequence A001631]{sloaneonlineseq}, respectively.

Our goal is to give short proofs of these results using a basic but powerful tool---Lemma \ref{lem:moserwhitney} below---that deserves to be more widely known. (Thus
our real goal is to spread the word.) This tool will naturally give rise to further generalizations and closed formulas for composition counting functions. We denote the
\Def{generating function} for the counting function $c_A(n)$ by $C_A(x) := 1 + \sum_{ n \ge 1 } c_A(n) \, x^n$.

\begin{lemma}\label{lem:moserwhitney}
$ \displaystyle
  C_A(x) = \frac{ 1 }{ 1 - \sum_{ m \in A } x^m } \, .
$
\end{lemma}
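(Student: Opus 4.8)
The plan is to expand $C_A(x)$ by sorting the compositions counted by $c_A(n)$ according to their length $k$. Write $P_A(x) := \sum_{m \in A} x^m$ for the generating function recording a single admissible part. In a composition $(\l_1, \dots, \l_k)$ with parts in $A$, each $\l_i$ ranges over $A$ with no constraint beyond the prescribed total, so the length-$k$ compositions are enumerated by the $k$-fold product: the coefficient of $x^n$ in
\[
  P_A(x)^k \;=\; \sum_{ \l_1, \dots, \l_k \in A } x^{ \l_1 + \dots + \l_k }
\]
is precisely the number of tuples $(\l_1, \dots, \l_k) \in A^k$ with $\l_1 + \dots + \l_k = n$. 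Letting $k = 0$ account for the empty composition of $0$ (the constant term $1$ in $C_A(x)$) and summing over all lengths gives $C_A(x) = \sum_{ k \ge 0 } P_A(x)^k$; this is just the symbolic-method statement that $C_A$ is the generating function of sequences of elements of $A$.

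The second step is to recognize this sum as a geometric series. Because $A \subseteq \Z_{>0}$, the power series $P_A(x)$ has zero constant term, so $1 - P_A(x)$ is a unit in the ring $\Z[[x]]$ of formal power series, and $\sum_{ k \ge 0 } P_A(x)^k = \bigl( 1 - P_A(x) \bigr)^{-1}$, which is exactly the claimed identity.

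The only point requiring a moment's care---and the closest thing here to an obstacle---is that $A$ may be infinite, so $P_A(x)$ need not be a polynomial, and one should make sure that summing two infinite series (over the length $k$ and over the parts $m \in A$) is legitimate. It is: for a fixed $n$, every composition of $n$ has length at most $n$ since all parts are $\ge 1$, so only the finitely many terms $P_A(x)^k$ with $0 \le k \le n$ influence the coefficient of $x^n$, and each such term is itself a well-defined formal power series. Hence the identity holds coefficient-by-coefficient; and since $c_A(n) \le 2^{\,n-1}$ for every $n$, both sides moreover agree as analytic functions on the disk $|x| < \tfrac{1}{2}$, so the formal identity is also an honest functional one there.
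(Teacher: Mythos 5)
Your proof is correct and follows the same route as the paper: decompose $C_A(x)$ by the length $k$ of the composition, observe that length-$k$ compositions are generated by $\left(\sum_{m\in A} x^m\right)^k$, and sum the resulting geometric series. The paper states this in one line; your added remarks on formal-power-series convergence (zero constant term, only $k \le n$ contributing to the coefficient of $x^n$) are a welcome but not essentially different elaboration.
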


The earliest reference for this lemma we are aware of is Feller's book \cite[p.~311]{feller} on probability theory, which was first published in
1950. The earliest combinatorics paper that includes Lemma~\ref{lem:moserwhitney} seems to be~\cite{moserwhitney}.

Lemma \ref{lem:moserwhitney} follows immediately from dissecting the generating function $C_A(x)$ according to how many parts a composition has: since the generating function for all compositions with exactly $j$ parts in $A$ equals $\left( \sum_{ m \in A } x^m \right)^j$, 
\begin{equation}\label{eq:moserwhitney}
  C_A(x) 
  = 1 + \sum_{ j \ge 1 } \left( \sum_{ m \in A } x^m \right)^j
  = \frac{ 1 }{ 1 - \sum_{ m \in A } x^m } \, .
\end{equation}
We note that Lemma \ref{lem:moserwhitney} is equivalent to the (equally simple) fact that $c_A(n) = \sum_{m \in A} c_A(n-m)$.

We could now leave it up to the reader as a (fun) exercise to derive Theorems \ref{thm:cayley} and \ref{thm:robbins} from Lemma \ref{lem:moserwhitney},
including the challenge to find more results of the kind. We will give proofs and further results, such as Theorem \ref{thm:arithmseq} below, but the reader who would like
to experience the charm of Lemma~\ref{lem:moserwhitney} first hand should now take pencil, paper, and a good cup of coffee, before reading on.

The following sample result is novel but we think of it merely as a variation of the theme of using Lemma \ref{lem:moserwhitney} to count
compositions, as will hopefully become apparent in the next section.

\begin{theorem}\label{thm:arithmseq}
Fix positive integers $k$ and $m$ with $m < k$, and let $A := \left\{ m + jk : \, j \in \Z_{ \ge 0 } \right\}$.
Then $c_\A(n)$ is given by the sequence $(f_n)$ defined recursively through
\begin{align*}
  f_j &:= 2^{ j-1 } \ \text{ for } \ 1 \le j \le m-1 \\
  f_j &:= 2^{ j-1 } - 2^{ j-m } \ \text{ for } \ m \le j \le k \\
  f_j &:= f_{ j-1 } + \dots + f_{ j-m+1 } + f_{ j-m-1 } + \dots + f_{ j-k+1 } + 2 f_{ j-k } \ \text{ for } \ j > k \, .
\end{align*}
\end{theorem}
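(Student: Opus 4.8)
The plan is to feed $\A = \Z_{>0}\setminus A$ into Lemma~\ref{lem:moserwhitney} and then coax the resulting generating function into a rational function whose denominator encodes the claimed depth-$k$ recursion. Since $\sum_{a\ge1}x^a = \sum_{a\in A}x^a + \sum_{a\in\A}x^a$ and $\sum_{j\ge0}x^{m+jk} = \frac{x^m}{1-x^k}$, we have
\[
  \sum_{a\in\A}x^a \;=\; \frac{x}{1-x} \;-\; \frac{x^m}{1-x^k}\,,
\]
so Lemma~\ref{lem:moserwhitney} gives $C_\A(x) = \left(1 - \frac{x}{1-x} + \frac{x^m}{1-x^k}\right)^{-1}$. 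Clearing denominators,
\[
  C_\A(x) \;=\; \frac{(1-x)(1-x^k)}{(1-x)(1-x^k) - x(1-x^k) + x^m(1-x)} \;=\; \frac{(1-x)(1-x^k)}{(1-2x)(1-x^k) + x^m(1-x)}\,.
\]
The crucial simplification is that $1-x$ divides the denominator: using $1-x^k = (1-x)(1+x+\dots+x^{k-1})$ it equals $(1-x)\bigl[(1-2x)(1+x+\dots+x^{k-1}) + x^m\bigr]$, and expanding the bracket collapses it to $1 - x - x^2 - \dots - x^{k-1} + x^m - 2x^k$, where (since $1 \le m \le k-1$) the $+x^m$ cancels one of the earlier summands. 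Hence
\[
  C_\A(x) \;=\; \frac{1-x^k}{\,1 - x - x^2 - \dots - x^{k-1} + x^m - 2x^k\,}\,.
\]

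Next I would read the recursion straight off this expression. Writing $C_\A(x) = \sum_{n\ge0}f_n x^n$ and multiplying through by the denominator, comparison of the coefficient of $x^j$ in $\bigl(1 - x - \dots - x^{k-1} + x^m - 2x^k\bigr)\sum_n f_n x^n = 1 - x^k$ gives, for every $j > k$ (where the right-hand side vanishes),
\[
  f_j \;=\; f_{j-1} + \dots + f_{j-m+1} + f_{j-m-1} + \dots + f_{j-k+1} + 2f_{j-k}\,,
\]
which is exactly the asserted recursion. It then remains to verify the $k+1$ initial values $f_0,f_1,\dots,f_k$ against the low-order coefficients of $C_\A(x)$, where $f_0=1$ accounts for the empty composition. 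For $1\le j\le m-1$ one quickly gets $f_j = 2^{j-1}$ (equivalently, these are all $2^{j-1}$ unrestricted compositions of $j$, none of which can use a part from $A$ because the least element of $A$ is $m>j$); more systematically, comparing the coefficient of $x^j$ for $1\le j\le k-1$ yields the ``short'' recursions $f_j = f_{j-1}+\dots+f_{j-m+1}+f_{j-m-1}+\dots+f_1$, from which the claimed closed forms can be obtained by induction, with the case $j=k$ requiring separate attention to the contribution of the $-x^k$ in the numerator.

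The step I expect to demand the most care is pinning down the middle range $f_j = 2^{j-1} - 2^{j-m}$ for $m\le j\le k$. For these $j$ the only member of $A$ not exceeding $j$ is $m$ itself, so $f_j$ counts compositions of $j$ that merely avoid the single part $m$; the heart of the argument is to identify this count with $2^{j-1}-2^{j-m}$, either by a careful coefficient extraction from the rational form above (watching the numerator term $-x^k$ when $j=k$) or by a short combinatorial/inclusion–exclusion argument on compositions of $j$ with no part equal to $m$. Once the base cases are in hand, the theorem follows because the generating function of $(f_n)$ and $C_\A(x)$ are rational functions with the same denominator of degree $k$ that agree in their first $k+1$ Taylor coefficients, hence coincide. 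The whole computation runs parallel to the one behind Theorem~\ref{thm:robbins}.
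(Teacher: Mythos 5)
Your route to the generating function is essentially the paper's: you compute $\sum_{a\in\A}x^a$ as $\frac{x}{1-x}-\frac{x^m}{1-x^k}$ and factor out $1-x$, whereas the paper sums the complement directly over the residue classes mod $k$, but both land on the same rational function $C_\A(x)=\bigl(1-x^k\bigr)/\bigl(1-x-\dots-x^{m-1}-x^{m+1}-\dots-x^{k-1}-2x^k\bigr)$, and your coefficient comparison for $j>k$ correctly yields the stated depth-$k$ recursion. The genuine gap is exactly the step you flagged as demanding the most care, and it cannot be closed: the claimed middle initial values $f_j=2^{j-1}-2^{j-m}$ for $m\le j\le k$ are false in general. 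Comparing coefficients of $x^j$ in that range gives $f_j=\bigl(f_0+f_1+\dots+f_{j-1}\bigr)-f_{j-m}$, and an induction shows this agrees with $2^{j-1}-2^{j-m}$ only while $f_{j-m}=1$, i.e.\ essentially for $j\in\{m,m+1\}$. Concretely, for $m=2$ and $k\ge 5$ the number of compositions of $5$ avoiding the part $2$ is $7$ (namely $(5)$, two arrangements of $4+1$, three of $3+1+1$, and $(1,1,1,1,1)$), not $2^{4}-2^{3}=8$; and for $m=1$ the formula gives $f_j=0$ for all $1\le j\le k$, which (together with the homogeneous recursion) would force $c_\A(n)=0$ for all $n$, contradicting the paper's own Table \ref{tab:examplevalues}, whose column $c_{3,1}$ begins $0,1,1,1,3,\dots$. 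Your proposed inclusion--exclusion shortcut fails for the same reason: the number of compositions of $j$ containing at least one part equal to $m$ is not $2^{j-m}$ once a composition can contain two copies of $m$ or an $m$ flanked by nontrivial blocks.

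You should know that the paper's own proof commits the same sin --- it simply asserts these coefficient identities ``as in the proof of Theorem \ref{thm:robbins}'' --- so the defect lies in the statement itself, not merely in your write-up; your instinct that this was the delicate step was exactly right, and had you pushed the coefficient extraction through you would have found the error. The repair is to replace the middle initial conditions by what the coefficient comparison actually gives: $f_j=2^{j-1}$ for $1\le j\le m-1$ and $f_j=\sum_{\ell=0}^{j-1}f_\ell-f_{j-m}$ for $m\le j\le k$ (equivalently, run the $j>k$ recursion already for $1\le j\le k$ with $f_0:=1$ and $f_\ell:=0$ for $\ell<0$, subtracting $1$ at $j=k$ to account for the $-x^k$ in the numerator). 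With those corrected base cases, the rest of your argument --- same denominator of degree $k$, same first $k+1$ coefficients, hence identical series --- goes through verbatim.
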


Note that we could let $m=k$, as long as the recursion is interpreted correctly, which yields Theorem \ref{thm:robbins}; in this sense, Theorem \ref{thm:arithmseq}
can be viewed as a generalization of Theorem~\ref{thm:robbins}.

To conclude this introduction section, we remark that the recent literature has seen
more sophisticated applications of the above sequence constructions to general
combinatorial structures; see, e.g.,~\cite{flajoletsedgewick}.

%--------------------------------------

\section{Proofs}

To warm up, we start with the case $A = \Z_{ >0 }$. In this case Lemma \ref{lem:moserwhitney} says
\begin{align*}
  C_A(x)
  = \frac{ 1 }{ 1 - \sum_{ m \ge 1 } x^m } 
  = \frac{ 1 }{ 1 - \frac{ x }{ 1-x } } 
  = \frac{ 1-x }{ 1-2x } 
  = 1 + \frac{ x }{ 1-2x } 
  = 1 + \sum_{ n \ge 1 } 2^{ n-1 } x^n ,
\end{align*}
confirming that there are $2^{ n-1 }$ compositions of the positive integer $n$.
The above line contains essentially all ingredients we need for more complicated sets~$A$.

\begin{proof}[Proof of Theorem \ref{thm:robbins}]
Fix a positive integer $k$ and let $A$ be the set of all positive multiples of $k$.
By Lemma \ref{lem:moserwhitney},
\[
  C_\A(x)
  = \frac{ 1 }{ 1 - \frac{ x + x^2 + \dots + x^{ k-1 } }{ 1 - x^k }  } 
  = \frac{ 1 - x^k }{ 1 - x - x^2 - \dots - x^k } \, .
\]
Developing the right-hand side into a power series $1 + \sum_{ n \ge 1 } f_n \, x^n$ gives
\[
  \left( 1 - x - x^2 - \dots - x^k \right) \left( 1 + \sum_{ n \ge 1 } f_n \, x^n \right) = 1 - x^k ,
\]
and comparing coefficients yields the identities
\begin{align*}
  f_1 &= 1 \\
  f_2 &= f_1 + 1 = 2 \\
  f_3 &= f_2 + f_1 + 1 = 4 \\
      &\ \ \vdots \\
  f_{ k-1 } &= f_{ k-2 } + f_{ k-3 } + \dots + f_1 + 1 = 2^{ k-2 } \\
  f_k &= f_{ k-1 } + f_{ k-2 } + \dots + f_1 = 2^{ k-1 } - 1 \\
  f_j &= f_{ j-1 } + f_{ j-2 } + \dots + f_{ j-k } \ \text{ for } \ j > k \, . \qedhere
\end{align*}
\end{proof}

\begin{proof}[Proof of Theorem \ref{thm:arithmseq}]
Fix positive integers $k$ and $m$ with $m < k$, and let $A := \left\{ m + jk : \, j \in \Z_{ \ge 0 } \right\}$.
By Lemma \ref{lem:moserwhitney},
\[
  C_\A(x)
  = \frac{ 1 }{ 1 - \frac{ x + \dots + x^{ m-1 } + x^{ m+1 } + \dots + x^k }{ 1 - x^k }  } 
  = \frac{ 1 - x^k }{ 1 - x - \dots - x^{ m-1 } - x^{ m+1 } - \dots - x^{ k-1 } - 2 x^k } \, .
\]
Developing the right-hand side into a power series as in our proof of Theorem \ref{thm:robbins} yields the coefficient identities
\begin{align*}
  f_j &= 2^{ j-1 } \ \text{ for } \ 1 \le j \le m-1 \\
  f_j &= 2^{ j-1 } - 2^{ j-m } \ \text{ for } \ m \le j \le k \\
  f_j &= f_{ j-1 } + \dots + f_{ j-m+1 } + f_{ j-m-1 } + \dots + f_{ j-k+1 } + 2 f_{ j-k } \ \text{ for } \ j > k \, . \qedhere
\end{align*}
\end{proof}

%--------------------------------------

\section{Closed Formulas}

The classic formula
\begin{equation}\label{eq:fibformula}
  \frac {1}{\sqrt 5} \left( \frac{ 1 + \sqrt 5 }{ 2 } \right)^n - \frac {1}{\sqrt 5} \left( \frac{ 1 - \sqrt 5 }{ 2 } \right)^n
\end{equation}
for the $n\th$ Fibonacci number follows at once from a partial-fraction expansion of the Fibonacci generating function $\frac{ x }{ 1 - x - x^2 }$.
Since all generating functions that appear in our paper evaluate to rational functions, we can obtain closed formulas just as effortlessly for any of the composition
counting functions we discussed. We give a sample.

\begin{theorem}\label{thm:closedform}
Let $A = \left\{ 1+3j : \, j \in \Z_{ \ge 0 }  \right\}$, and let
\[
  \alpha \approx 0.6572981061 \ \text{ and } \ \beta, \gamma \approx -0.5786490531 \pm 0.6525757633 \, i
\]
be the three roots of $1-x^2-2x^3$. Then
\begin{equation}\label{eq:closedform}
  c_\A(n) = \frac{ 1 + \alpha }{ (2 + 6 \alpha) } \left( \frac 1 \alpha \right)^n +
\frac{ 1 + \beta }{ (2 + 6 \beta) } \left( \frac 1 \beta \right)^n + \frac{ 1 +
\gamma }{ (2 + 6 \gamma) } \left( \frac 1 \gamma \right)^n \, .
\end{equation}
\end{theorem}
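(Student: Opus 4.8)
The plan is to apply the partial-fraction recipe advertised just before the theorem to the rational function that Lemma~\ref{lem:moserwhitney} supplies. For $A=\{1+3j:\,j\ge 0\}$ we have $\sum_{a\in A}x^a=\frac{x}{1-x^3}$, so $\A=\Z_{>0}\setminus A$ is the set of positive integers $\not\equiv 1\pmod 3$ and
\[
  C_\A(x)=\frac{1}{\,1-\frac{x^2+x^3}{1-x^3}\,}=\frac{1-x^3}{1-x^2-2x^3}\,,
\]
which is the $m=1$, $k=3$ case of the computation in the proof of Theorem~\ref{thm:arithmseq}. One step of long division, $1-x^3=\tfrac12\,(1-x^2-2x^3)+\tfrac12\,(1+x^2)$, rewrites this as
\[
  C_\A(x)=\frac12+\frac{1+x^2}{2\,(1-x^2-2x^3)}\,,
\]
with the second summand now a proper rational function.

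Next I would check that $q(x):=1-x^2-2x^3$ is squarefree: its derivative $q'(x)=-2x(1+3x)$ vanishes only at $0$ and $-\tfrac13$, where $q$ takes the nonzero values $1$ and $\tfrac{26}{27}$, so $q$ and $q'$ have no common root. Hence $q$ has three distinct nonzero roots $\alpha,\beta,\gamma$, and, since $q(0)=1$, we have $q(x)=(1-x/\alpha)(1-x/\beta)(1-x/\gamma)$. The partial-fraction expansion therefore involves only simple poles:
\[
  C_\A(x)=\frac12+\sum_{\rho\in\{\alpha,\beta,\gamma\}}\frac{A_\rho}{1-x/\rho}\,.
\]

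The heart of the argument is the cover-up evaluation of $A_\rho$ followed by one simplification. Multiplying by $1-x/\rho$, letting $x\to\rho$, and using $q'(x)=-2x-6x^2$ gives
\[
  A_\rho=\frac{1+\rho^2}{2}\cdot\frac{-1}{\rho\,q'(\rho)}=\frac{1+\rho^2}{2}\cdot\frac{-1}{\rho(-2\rho-6\rho^2)}=\frac{1+\rho^2}{4\rho^2(1+3\rho)}\,.
\]
Now the defining equation of $\rho$ enters: from $1-\rho^2-2\rho^3=0$ we get $1+\rho^2=2\rho^2(1+\rho)$, whence
\[
  A_\rho=\frac{1+\rho}{2(1+3\rho)}=\frac{1+\rho}{2+6\rho}\,,
\]
exactly the coefficient appearing in \eqref{eq:closedform}. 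This algebraic collapse---recognizing that $\rho$'s own equation turns the raw residue into the clean expression in the statement---is the one step that is more than bookkeeping, and I expect it to be the main (if modest) obstacle; everything else is mechanical.

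Finally, since the constant $\tfrac12$ affects only the $x^0$ term, reading off the coefficient of $x^n$ for $n\ge 1$ gives
\[
  c_\A(n)=[x^n]\,C_\A(x)=\sum_{\rho\in\{\alpha,\beta,\gamma\}}A_\rho\left(\frac{1}{\rho}\right)^n=\sum_{\rho\in\{\alpha,\beta,\gamma\}}\frac{1+\rho}{2+6\rho}\left(\frac{1}{\rho}\right)^n,
\]
which is precisely \eqref{eq:closedform}. As a check one can confirm $c_\A(1)=0$, $c_\A(2)=1$, $c_\A(3)=1$, $c_\A(4)=1$, $c_\A(5)=3$ against the series expansion of $\frac{1-x^3}{1-x^2-2x^3}$ (equivalently, by listing the compositions of $n$ into parts $\not\equiv 1\pmod 3$), which pins down the normalization of each term.
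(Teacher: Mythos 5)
Your proof is correct and follows essentially the same route as the paper: Lemma \ref{lem:moserwhitney} gives $C_\A(x)=\frac{1-x^3}{1-x^2-2x^3}$, a partial-fraction expansion over the simple roots of the denominator is carried out, and the root equation $1-\rho^2-2\rho^3=0$ collapses each residue to $\frac{1+\rho}{2+6\rho}$. If anything you are slightly more careful than the paper: since numerator and denominator have equal degree, the expansion carries a constant term $\tfrac12$, which you extract by polynomial division and correctly note affects only the coefficient of $x^0$, whereas the paper's displayed expansion omits this constant (harmless for $n\ge 1$, which is all the theorem asserts).
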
 

\begin{proof}
By Lemma \ref{lem:moserwhitney},
\[
  C_\A(x) = \frac{ 1 }{ 1 - \frac{ x^2 + x^3 }{ 1 - x^3 } } = \frac{ 1 - x^3 }{ 1 - x^2 - 2 x^3 } \, .
\]
This rational function comes with the partial-fraction expansion
\[
  C_\A(x) = \frac{ \frac{ 1 - \alpha^3 }{ -2 \alpha - 6 \alpha^2 } }{ x - \alpha } 
       + \frac{ \frac{ 1 - \beta^3 }{ -2 \beta - 6 \beta^2 } }{ x - \beta } 
       + \frac{ \frac{ 1 - \gamma^3 }{ -2 \gamma - 6 \gamma^2 } }{ x - \gamma } 
\]
which, using $1 - \alpha^3 = \alpha^2 + \alpha^3$ and the analogous relations for $\beta$ and $\gamma$, gives
\[
  C_\A(x) = \frac{ 1 + \alpha }{ (2 + 6 \alpha) } \, \frac{ 1 }{ 1 - \frac x \alpha }
       + \frac{ 1 + \beta }{ (2 + 6 \beta) } \, \frac{ 1 }{ 1 - \frac x \beta }
       + \frac{ 1 + \gamma }{ (2 + 6 \gamma) } \, \frac{ 1 }{ 1 - \frac x \gamma } \, .
\]
The result now follows from expanding the geometric series.
\end{proof}

One of the charming consequences of the formula \eqref{eq:fibformula} for the Fibonacci sequence is that the term $- \frac {1}{\sqrt 5} \left( \frac{ 1 - \sqrt 5 }{ 2
} \right)^n$ converges to zero so quickly that we can compute the $n\th$ Fibonacci number as the nearest integer to $\frac {1}{\sqrt 5} \left( \frac{ 1 + \sqrt 5 }{ 2
} \right)^n$.
Unfortunately, the situation with the formula for $c_\A(n)$ presented in Theorem \ref{thm:closedform} is not quite as friendly, since $\alpha$, $\beta$, and $\gamma$
all have absolute value less than 1, and thus each of the three terms in \eqref{eq:closedform} grows exponentially with $n$.

Repeating the steps in the proof of Theorem \ref{thm:closedform} for the case $A = \left\{ 2+3j : \, j \in \Z_{ \ge 0 }  \right\}$ gives a similar picture: here we
need to consider the three roots
\[
  \alpha \approx 0.5897545123 \ \text{ and } \ \beta, \gamma \approx -0.2948772562 \pm 0.8722716255 \, i
\]
of $1-x-2x^3$. Again all of them have absolute value less than 1, and so each of the three terms in the associated composition counting function 
\[
  c_\A(n) = \frac{ 1 + \alpha^2 }{ (1 + 6 \alpha^2) } \left( \frac 1 \alpha
\right)^n + \frac{ 1 + \beta^2 }{ (1 + 6 \beta^2) } \left( \frac 1 \beta \right)^n + \frac{ 1 + \gamma^2 }{ (1 + 6 \gamma^2) } \left( \frac 1 \gamma \right)^n
\]
grows exponentially with $n$.

On the other hand, if we consider the case $A = \left\{ 3j : \, j \in \Z_{ > 0 }  \right\}$, we need the roots 
\[
  \alpha \approx 0.5436890127 \ \text{ and } \ \beta, \gamma \approx -0.7718445063 \pm 1.1151425080 \, i
\]
of $1-x-x^2-x^3$. Here the complex roots have absolute value larger than 1, and so the associated composition counting function 
\[
  c_\A(n) = \frac{ 1 + \alpha }{ (1 + 2 \alpha + 3 \alpha^2) } \left( \frac 1
\alpha \right)^n + \frac{ 1 + \beta }{ (1 + 2 \beta + 3 \beta^2) } \left( \frac
1 \beta \right)^n + \frac{ 1 + \gamma }{ (1 + 2 \gamma + 3 \gamma^2) } \left( \frac 1 \gamma \right)^n
\]
is well approximated by the first term; in fact, $c_\A(n)$ equals the nearest integer to
$\frac{ 1 + \alpha }{ (1 + 2 \alpha + 3 \alpha^2) } \left( \frac 1 \alpha \right)^n$, for any $n > 0$. Of course, the error decreases exponentially; it is smaller than 1\% already for $n=4$. 

Table \ref{tab:examplevalues} gives the first twenty values for the three composition counting functions mentioned in this section; here we use the notation
$c_{ 3, j } (n)$ for the number of compositions of $n$ with parts not congruent to $j$ (mod 3).

\begin{table}[htb]
\begin{center}
\begin{tabular}{|r|r|r|r|}
\hline
$n$ & $ c_{ 3, 1 } (n) $ & $ c_{ 3, 2 } (n) $ & $ c_{ 3, 0 } (n) $ \\
\hline
 1 & 0 & 1 & 1 \\
\hline
 2 & 1 & 1 & 2 \\
\hline
 3 & 1 & 2 & 3 \\
\hline
 4 & 1 & 4 & 6 \\
\hline
 5 & 3 & 6 & 11 \\
\hline
 6 & 3 & 10 & 20 \\
\hline
 7 & 5 & 18 & 37 \\
\hline
 8 & 9 & 30 & 68 \\
\hline
 9 & 11 & 50 & 125 \\
\hline
10 & 19 & 86 & 230 \\
\hline
11 & 29 & 146 & 423 \\
\hline
12 & 41 & 246 & 778 \\
\hline
13 & 67 & 418 & 1431 \\
\hline
14 & 99 & 710 & 2632 \\
\hline
15 & 149 & 1202 & 4841 \\
\hline
16 & 233 & 2038 & 8904 \\
\hline
17 & 347 & 3458 & 16377 \\
\hline
18 & 531 & 5862 & 30122 \\
\hline
19 & 813 & 9938 & 55403 \\
\hline
20 & 1225 & 16854 & 101902 \\
\hline
\end{tabular}
\end{center}
\caption{Evaluations of the composition counting functions avoiding arithmetic sequences modulo~3.}\label{tab:examplevalues}
\end{table}

%--------------------------------------

\section{Concluding Remarks}

There exist bivariate versions of all results we have discussed so far, where one keeps track of both the number $n$ whose compositions we want to count and the
number $m$ of parts in the compositions. Let $c_A(n,m)$ denote the number of compositions of $n$ with precisely $m$ parts in the set $A$, and let
\[
  C_A(x,y) := \sum_{ n, m \ge 0 } c_A(n,m) \, x^n y^m , 
\]
where we set $c_A(0,0) := 1$. Of course, this means we can recover our previous generating function as $C_A(x) = C_A(x,1)$.
The accompanying version of Lemma \ref{lem:moserwhitney}, which one easily verifies by inserting a $y$ into the beginning of the large parenthesis in
\eqref{eq:moserwhitney}, seems to have first appeared in \cite{hoggattlind}.

\begin{lemma}\label{eq:moserwhitneyext}
$ \displaystyle
  C_A(x,y) = \frac{ 1 }{ 1 - y \sum_{ m \in A } x^m } \, .
$
\end{lemma}

For example, when $A$ is the set of all positive odd integers,
\[
  C_A = \frac{ 1 }{ 1 - y \frac{ x }{ 1-x^2 } } = \frac{ 1 - x^2 }{ 1 - x^2 - xy } = 1 + \frac{ xy }{ 1 - x^2 - xy } \, .
\]
This yields a recurrence similar to the one given by Pascal's triangle, and in fact, $c_A(n,m)$ are
binomial coefficients, as can also be easily proved bijectively.

The usefulness of Lemma \ref{lem:moserwhitney} is not restricted to the cases treated so far. For example, for $A = \Z_{ \ge 2 }$ we obtain
\[
  C_A(x)
  = \frac{ 1 }{ 1 - \frac{ x^2 }{ 1-x } }
  = \frac{ 1-x }{ 1 - x - x^2 }
  = 1 + \frac{ x^2 }{ 1 - x - x^2 } \, ,
\]
which is the rational generating function for the Fibonacci series appended by a constant term of 1; this confirms Cayley's result mentioned in the introduction.
One might just as well consider, e.g., $A = \Z_{ \ge 3 }$:
\[
  C_A(x)
  = \frac{ 1 }{ 1 - \frac{ x^3 }{ 1-x } }
  = \frac{ 1-x }{ 1 - x - x^3 }
  = 1 + \frac{ x^3 }{ 1 - x - x^3 } \, ,
\]
whose corresponding composition counting function $c_A(n)$ is hence given by a third-order linear recurrence.

As a final example, we give a simple generating-function proof of the following result of Zeilberger
\cite{zeilbergercompositions}, which was inspired by Sills \cite{sillscompositions}.

\begin{theorem}
Fix positive integers $a$ and $b$. Then the number of compositions of $n$ with parts $a$ and $b$ equals the number
of compositions of $n+a$ with parts in $\{ a + bj : \, j \ge 0 \}$ (and thus, by symmetry, also the number of
compositions of $n+b$ with parts in $\{ aj + b : \, j \ge 0 \}$).
\end{theorem}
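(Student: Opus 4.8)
The plan is to apply Lemma \ref{lem:moserwhitney} to both counting functions and thereby reduce the claimed equality to a one-line identity between rational functions. Write $B := \{ a, b \}$ for the set of allowed parts on the left-hand side (assuming $a \ne b$; see below for $a = b$), and $A := \{ a + bj : j \in \Z_{ \ge 0 } \}$ for the right-hand side, with corresponding composition generating functions $C_B(x)$ and $C_A(x)$. Lemma \ref{lem:moserwhitney} gives $C_B(x) = \frac{ 1 }{ 1 - x^a - x^b }$, and, summing the geometric series $\sum_{ j \ge 0 } x^{ a + bj } = \frac{ x^a }{ 1 - x^b }$,
\[
  C_A(x) = \frac{ 1 }{ 1 - \frac{ x^a }{ 1 - x^b } } = \frac{ 1 - x^b }{ 1 - x^a - x^b } \, .
\]

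Next I would observe the identity $C_A(x) = 1 + x^a \, C_B(x)$, which is checked in a single line: $1 + x^a C_B(x) = 1 + \frac{ x^a }{ 1 - x^a - x^b } = \frac{ 1 - x^b }{ 1 - x^a - x^b } = C_A(x)$. (Equivalently, it is just the power-series identity $(1 - x^a - x^b) C_B(x) = 1$ of Lemma \ref{lem:moserwhitney}, rearranged as $(1 - x^b) C_B(x) = 1 + x^a C_B(x)$.) Comparing coefficients of $x^{ n+a }$ on both sides for $n \ge 1$ — where the constant term $1$ and the term $x^a$ on the right contribute nothing, since $n + a \ge 2$ and $n + a \ne a$ — yields $c_A(n+a) = c_B(n)$, which is precisely the asserted equality between the number of compositions of $n$ with parts in $\{ a, b \}$ and the number of compositions of $n+a$ with parts in $\{ a + bj : j \ge 0 \}$. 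The ``by symmetry'' clause is literally a symmetry: interchanging $a$ and $b$ leaves $B$ unchanged and turns the computation into $C_{ \{ aj + b : j \ge 0 \} }(x) = 1 + x^b C_B(x)$, so the number of compositions of $n+b$ with parts in $\{ aj + b : j \ge 0 \}$ likewise equals $c_B(n)$.

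I do not expect a genuine obstacle; the section's point is exactly that Lemma \ref{lem:moserwhitney} trivializes such statements. The only items warranting a little care are the formal-power-series bookkeeping — the series $1 - x^b$ is a unit in $\Z[[x]]$ because its constant term is $1$, so all the rational-function arithmetic is legitimate — and the degenerate case $a = b$, where for the statement to remain true one must read ``compositions with parts $a$ and $b$'' as using two distinguishable parts of value $a$, i.e.\ take the generating function to be $\frac{1}{1 - x^a - x^b} = \frac{1}{1 - 2 x^a}$ rather than $\frac{1}{1 - x^a}$; with that convention the proof above goes through verbatim.
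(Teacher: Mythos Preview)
Your proof is correct and is essentially the same as the paper's: both apply Lemma~\ref{lem:moserwhitney} to the two sets, obtain $\frac{1-x^b}{1-x^a-x^b}$ for the arithmetic-progression side, and rewrite this as $1 + x^a$ times the generating function for compositions with parts $a$ and $b$. The only differences are cosmetic (you swap the letters $A$ and $B$ relative to the paper) and that you spell out the coefficient comparison and the degenerate case $a=b$, which the paper omits.
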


This theorem generalizes the fact that both compositions with odd parts (the case $a=1$, $b=2$) and compositions
with parts greater than 1 (the case $a=2$, $b=1$) are counted by the Fibonacci numbers.

\begin{proof}
Let $A = \{ a, b \}$ and $B = \{ a + bj : \, j \ge 0 \}$. By Lemma \ref{lem:moserwhitney},
\[
  C_B(x)
  = \frac{ 1 }{ 1 - \frac{ x^a }{ 1 - x^b } }
  = \frac{ 1 - x^b }{ 1 - x^a - x^b }  
  = 1 + \frac{ x^a }{ 1 - x^a - x^b }
  = 1 + x^a \, C_A(x) \, . \qedhere
\]
\end{proof}

All of the results in this section are merely further examples of the treasure trove that Lemma~\ref{lem:moserwhitney} provides.

%--------------------------------------

\bibliographystyle{amsplain}  
\bibliography{bib}  

\end{document}